\def\tsc#1{\csdef{#1}{\textsc{\lowercase{#1}}\xspace}}
\newtheorem{theorem}{\bf Theorem}[section]
\newtheorem{remark}{\bf Remark}[section]
\newtheorem{lemma}{\bf Lemma}[section]
\newcommand{\red}[1]{\textcolor{black}{#1}}
\def \bes{\begin{eqnarray}}
\def \ees{\end{eqnarray}}
\def \bns{\begin{eqnarray*}}
\def \ens{\end{eqnarray*}}
\begin{document}
\let\WriteBookmarks\relax
\def\floatpagepagefraction{1}
\def\textpagefraction{.001}
\shorttitle{PSENet}
\shortauthors{Q. Chen et~al.}

\title [mode = title]{Power series expansion neural network}

\author[1]{Qipin Chen}

\address[1]{Department of Mathematics, Pennsylvania State University, University Park, PA 16802}
\author[1]{Wenrui Hao}\cormark[1]

\author[2]{Juncai He}

\address[2]{Department of Mathematics, The University of Texas at Austin, Austin, TX 78712}

\begin{abstract}
In this paper, we develop a new neural network family based on power series expansion, which is proved to achieve a better approximation accuracy in comparison with existing neural networks. This new set of neural networks embeds the power series expansion (PSE) into the neural network structure. Then it
can improve the representation abilitiy while preserving
comparable computational cost by increasing the degree of PSE instead of
 increasing the
depth or width. Both theoretical approximation and numerical results show the advantages of this new neural network.
\end{abstract}

\begin{highlights}
\item We develop a new set of neural network by introducing  power series expansion;
\item Theoretical analysis shows that PSENet has a better approximation.
\item The new set of neural networks has been tested on different datasets and is shown the advantages;
\end{highlights}

\begin{keywords}
Neural network\sep Power series expansion\sep Approximation analysis
 \end{keywords}

\maketitle

\section{Introduction}

Machine learning has been experiencing an extraordinary resurgence in many important artificial intelligence applications since the late 2000s. In particular, it has produced state-of-the-art accuracy in computer vision, video analysis, natural language processing,  and speech recognition. Recently, interest in machine learning-based approaches in the applied mathematics community has increased rapidly \cite{kang2021ident,wang2021laplacian,zhu2019stop}. This growing enthusiasm for machine learning stems from a massive amount of data available from scientific computations and other sources\red{:} the design of efficient data
analysis algorithms\red{,} advances in high-performance computing and the data-driven modeling \cite{lei2020machine,lu2021deepxde}. To date, there are also many theoretical works on the approximation rate of the neural network, such as 
cosine neural networks \cite{jones1992simple},   sigmoidal neural networks\cite{barron1993universal}, shallow ReLU$^k$ networks  \cite{siegel2020high}, and Deep ReLU networks \cite{lu2020deep,shen2019deep}. 
 However, the main challenge of machine learning is the training process as both complexity and memory \red{requirements} grow rapidly \cite{chen2019homotopy} for deep or wide neural networks. Thus this significant increase in computational cost may not be justified by the performance gain that approximation theories bring.
%
%
\red{Power Series Expansion (PSE)} has been widely used in function approximation and \red{ the resulting linear system can be easily solved even for large-scale computation, for instance, spectral method~\cite{shen2011spectral}.} However, the curse of dimensionality is the main obstacle in the numerical treatment of most high-dimensional problems based on the PSE approximation. In this paper, we will combine the ideas of neural network and PSE to develop a new network, \red{ which we call} PSENet. This new network can achieve a higher accuracy even for shallow or narrow networks.

\section{The formulation of PSENet}
\red{Feed-forward} neural networks, consisting of a series of fully connected layers, can be written as a function from the input $x\in\mathbb{R}^d$ to the output $y\in\mathbb{R}^\kappa$.  Mathematically, \red{such} a neural network with $L$ hidden layers can be written as follows
	\bes	\label{FCNN}
		y(x;\theta) = W_{\red{L+1}}h_{L}+b_{L+1}, ~h_i=\sigma(W_i h_{i-1}+b_i), i\in\{1,\cdots, L\}, \hbox{and }h_0=x,
	\ees
where $W_i\in\mathbb{R}^{\red{d_{i}\times d_{i-1}}}$ is the weight \red{matrix}, $b_i\in\mathbb{R}^{\red{d_{i}}}$ is the bias, $d_i$ is the width of \red{the} $i$-th hidden layer \red{($d_0=d, d_{L+1} = \kappa$)}, and $\sigma$ is the	activation function (for example, the rectified linear unit (ReLU) or the sigmoid activation functions) \red{which is simply applied element-wise on each layer. Moreover,  there is no activation function on the read-out layer since we consider the regression problems.}

\red{Inspired by the power series expansion for a smooth function $f(x)$, namely, $\displaystyle f(x)\approx \sum_{j=0}^n \alpha_j x^j$,
we define each layer  by analogy to a power series expansion.} \red{More precisely, we define a typical PSENet architecture as:
	\bes	\label{PSENet}
	y(x;\theta) = W_{L+1}h_{L}+b_{L+1}, ~h_i=\sum_{j=0}^n{\alpha_{i,j}} \odot \sigma^j (W_i h_{i-1}+b_i), i\in\{1,\cdots, L\}, \hbox{and }h_0=x,
	\ees}
where $\alpha_{i,j} \in \mathbb{R}^{d_i}$ \red{are} the unknown coefficients, \red{$\alpha_{i,j}\odot \sigma^j(W_i h_{i-1}+b_i)$ denoting the Hadamard product of vectors means element-wise multiplication} and $\sigma^j$ stands for $j$-th power of the activation function. Specifically, we define \red{$\sigma^0(x)=x$ as the identity function}. Thus the PSENet is reduced to the ResNet by setting \red{$n=1$} \red{and} \red{$\alpha_{i,0}=\alpha_{i,1}= (1,1,\cdots,1)$}:
	\bes	\label{ResNet}
	h_i=\sigma(W_i h_{i-1}+b_i)+\red{W_ih_{i-1}+b_i}.
	\ees
Thus the PSENet \red{can be thought of as adding} a shortcut \red{(when \red{$\alpha_{i,0} \ne 0$})} or a skip connection that allows information to flow, well just say, more easily from one layer to the next's next layer, i.e., it bypasses data along with normal neural network flow from one layer to the next layer after the immediate next.

\red{For the sake of brevity when discussing the approximation properties of the network in Section~3},
we  define the following general architecture of a hidden layer in PSENet
\begin{equation}\label{PSENet-1}
	h_i=\sum_{j=0}^n \alpha_{i,j}\red{\odot}\sigma^j(W_{i,j} h_{i-1}+b_{i,j}),
\end{equation}
where $W_{i,j}: \mathbb{R}^{d_i \times d_{i-1}}$ \red{and $b_{i,j}\in \mathbb{R}^{d_i}$}. 
On the one hand, it is easy to see that the
original definition in \eqref{PSENet} can be covered by the above formula if
we make weights $W_{i,j}=W_{i}$ and $b_{i,j}=b_{i}$.
\red{On the other hand, one may imagine that the generalized PSENet in~\eqref{PSENet-1} can be reproduced by the PSENet defined in~\eqref{PSENet}
by constructing a block-wise $W_i$ consisting of $(n-1)$ times replicates of $W_{i,j}$ and an appropriate $\alpha_{i,j}$ with a special sparse structure.}
As a consequence, we have the following theorem to show the equivalence between \red{the} two formulas.

\begin{theorem}\label{thm:equivalence}
\red{Let $f(x): \mathbb{R}^d \mapsto \mathbb{R}^\kappa$ be a \red{generalized} PSENet model defined}
	by \eqref{PSENet-1} with hyper-parameters maximal power $n$ and \red{widths} $d_i$ \red{for i=1:L}. There exists
	a PSENet model $\tilde f(x)$ defined by \eqref{PSENet} with hyper-parameters maximal power $n$
	 and \red{widths} $\tilde d_i = \red{(n+1)} d_i$ \red{for i=1:L,} \red{such that $\tilde f(x) = f(x)$}.
\end{theorem}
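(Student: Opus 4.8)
The plan is to give a fully constructive proof: from the generalized PSENet $f$ with per-power weights $W_{i,j}$, biases $b_{i,j}$ and coefficients $\alpha_{i,j}$, I will exhibit a standard PSENet $\tilde f$ of the form \eqref{PSENet}, with a single shared weight $\tilde W_i$ and bias $\tilde b_i$ per layer together with coefficients $\tilde\alpha_{i,j}$, that reproduces $f$ once each hidden width is inflated to $\tilde d_i=(n+1)d_i$. The core idea is to reserve one block of $d_i$ coordinates for each power: I partition the index set of the $i$-th hidden layer into $n+1$ consecutive blocks $B_0,\dots,B_n$ of size $d_i$, and arrange that the restriction $(\tilde h_i)_{B_j}$ carries exactly the $j$-th term $\alpha_{i,j}\odot\sigma^j(W_{i,j}h_{i-1}+b_{i,j})$ of the generalized layer. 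Maintaining this block bookkeeping across layers is essentially the whole content of the argument.

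For the explicit construction, for $2\le i\le L$ I write $\tilde W_i$ as an $(n+1)\times(n+1)$ array of $d_i\times d_{i-1}$ blocks and set \emph{every} block in the $j$-th block-row equal to $W_{i,j}$, while the $j$-th block of $\tilde b_i$ is set to $b_{i,j}$. Applied to $\tilde h_{i-1}$, whose blocks I will inductively assume sum to the generalized hidden state $h_{i-1}$, this makes block $B_j$ of the single pre-activation $z_i=\tilde W_i\tilde h_{i-1}+\tilde b_i$ equal to $W_{i,j}h_{i-1}+b_{i,j}$, i.e.\ exactly the $j$-th per-power pre-activation; the shared weight thus computes all $n+1$ pre-activations at once, one per block. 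To select the correct power in each block I take $\tilde\alpha_{i,k}$ supported only on $B_k$, where it equals $\alpha_{i,k}$, and zero elsewhere, so that in $\sum_{k=0}^n \tilde\alpha_{i,k}\odot\sigma^k(z_i)$ only the $k=j$ term survives on $B_j$, leaving $\alpha_{i,j}\odot\sigma^j(W_{i,j}h_{i-1}+b_{i,j})$ there. The input layer is handled identically except that $\tilde W_1$ is a single stack of the $W_{1,j}$ (since $\tilde h_0=h_0=x$ is unblocked), and the read-out layer uses $\tilde W_{L+1}=[\,W_{L+1}\ \cdots\ W_{L+1}\,]$ with $n+1$ copies and $\tilde b_{L+1}=b_{L+1}$, so that it collapses the blocks of $\tilde h_L$ by summation.

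The verification is then a short induction on $i$ with the invariant that $(\tilde h_i)_{B_j}$ equals the $j$-th generalized term and hence $\sum_{j=0}^n (\tilde h_i)_{B_j}=h_i$. The base case $i=1$ is immediate from the input-layer construction, and the inductive step is precisely the computation above, which uses only the summed-blocks identity at level $i-1$. Feeding the result into the read-out yields $\tilde W_{L+1}\tilde h_L+\tilde b_{L+1}=W_{L+1}h_L+b_{L+1}=f(x)$, so $\tilde f=f$. I expect no genuine difficulty in the calculus, since every step is linear algebra; the one real design constraint is that the standard PSENet forces a single shared pair $(\tilde W_i,\tilde b_i)$ across all powers, and the replication of $W_{i,j}$ across the columns of $\tilde W_i$ together with the sparse, power-selecting structure of $\tilde\alpha_{i,j}$ is exactly what circumvents it. Keeping the block indexing consistent between consecutive layers, especially at the unblocked input and the collapsed read-out, is the only place where care is needed.
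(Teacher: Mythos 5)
Your proof is correct and follows essentially the same route as the paper's: inflate each hidden width by a factor of $(n+1)$, replicate the per-power weights $W_{i,j}$ across the block-rows of the shared $\tilde W_i$, use sparsely supported $\tilde\alpha_{i,j}$ to select the $j$-th power on the $j$-th block, and verify the block invariant by induction on the layers, collapsing the blocks at the read-out. The only difference is bookkeeping: the paper keeps the raw powers $\sigma^j(W_{i,j}h_{i-1}+b_{i,j})$ in the blocks, uses $0/1$ selector vectors, and absorbs the coefficients $\alpha_{i,j}$ into the adjacent weight matrices via the diagonal matrices $A_{i,j}=\mathrm{Diag}(\alpha_{i,j})$, whereas you carry $\alpha_{i,j}\odot\sigma^j(W_{i,j}h_{i-1}+b_{i,j})$ in the blocks and place the coefficients inside the selectors $\tilde\alpha_{i,j}$ themselves --- an equally valid and arguably cleaner variant that avoids the diagonal-matrix factors.
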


\begin{proof}
	\red{As defined in \eqref{PSENet-1}}, the \red{generalized} PSENet function $f(x)$ \red{has the form of}
$f(x) = W_{L+1} h_L(x) + b_L$ where
	\begin{equation}\label{key}
		h_i(x)=\sum_{j=0}^n\alpha_{i,j}\red{\odot}\sigma^j(W_{i,j} h_{i-1}(x)+b_{i,j}), \quad i = 1,\cdots, L,
	\end{equation}
	where $h_0(x) = x$ and \red{$W_{L+1} \in \mathbb{R}^{\kappa \times d_L}$}.
	For simplicity, we denote \red{$A_{i,j} = {\rm Diag}(\alpha_{i,j})\in \mathbb{R}^{d_i \times d_i}$ as the diagonal matrix obtained by taking $\alpha_{i,j}$ as the diagonal elements. It follows that $\alpha_{i,j}\red{\odot}\sigma^j(W_{i,j} h_{i-1}(x)+b_{i,j}) = A_{i,j}\sigma^j(W_{i,j} h_{i-1}(x)+b_{i,j})$}
\red{According to \eqref{PSENet}}, we denote $\tilde f(x) = \tilde W_{L+1} \tilde h_L(x) + \red{\tilde b_{L+1}}$ where
	\begin{equation}\label{key}
	\tilde h_i(x)=\sum_{j=0}^n\tilde \alpha_{i,j}\red{\odot}\sigma^j(\tilde W_{i} \tilde h_{i-1}(x)+\tilde b_{i}), \quad i = 1,\cdots,L,
\end{equation}
$\tilde h_0(x) = x$ and \red{$\tilde W_{L+1} \in \mathbb{R}^{\kappa \times (n+1)d_L}$}.
Now, we construct $\tilde f(x)$ by taking
\red{
\begin{equation}\label{key}
\tilde W_{i} = \begin{pmatrix}
W_{i,0}A_{i-1,0} & W_{i,0}A_{i-1,1}& \cdots&W_{i,1}A_{i-1,n} \\
W_{i,1}A_{i-1,0} & W_{i,1}A_{i-1,1} & \cdots &W_{i,1}A_{i-1,n} \\
\vdots & \vdots & \vdots & \vdots \\
W_{i,n}A_{i-1,0} & W_{i,n}A_{i-1,1} & \cdots &W_{i,n}A_{i-1,n}
\end{pmatrix} \in \mathbb{R}^{(n+1)d_i \times (n+1)d_{i-1}},
\tilde b_{i} = \begin{pmatrix}
	b_{i,0} \\
	b_{i,1}  \\
	\vdots  \\
	b_{i,n}
\end{pmatrix}\in \mathbb{R}^{(n+1)d_i},
\end{equation}
\begin{equation}
\text{ and }
    \tilde \alpha_{i,j} = \big(\underbrace{0,\cdots,0}_{(j-1)d_i}, \underbrace{1,\cdots,1}_{d_i} ,\underbrace{0, \cdots ,0}_{(n+1-j)d_i}
\big)^T \in \mathbb{R}^{(n+1)d_i},
\end{equation}}
for $i = 2,\cdots, L$. 
In addition, we take
\begin{equation}\label{key}
		\tilde W_{1} = \begin{pmatrix}
		W_{1,0}\\
		W_{1,1}  \\
		\vdots  \\
		W_{1,n}
	\end{pmatrix},
\tilde b_{1} = \begin{pmatrix}
	b_{1,0} \\
	b_{1,1}  \\
	\vdots  \\
	b_{1,n}
\end{pmatrix},
\tilde W_{L+1} = \begin{pmatrix}
		W_{L+1}A_{L,0} \\ W_{L+1}A_{L,1} \\ \vdots \\ W_{L+1}A_{L,n}
	\end{pmatrix}^T,
	\text{ and }
	\tilde b_{L+1} = b_{L+1}.
\end{equation}
Then, we can finish the proof by \red{showing} that
\begin{equation}\label{key}
	\tilde h_i(x) =
	\begin{pmatrix}
		[\tilde h_i(x)]_{0} \\
		[\tilde h_i(x)]_{1}  \\
		\vdots  \\
		[\tilde h_i(x)]_{n}
	\end{pmatrix} =
	\begin{pmatrix}
	\sigma^1(W_{i,0}h_{i-1}(x) + b_{i,0}) \\
		\sigma^2(W_{i,1}h_{i-1}(x) + b_{i,1})  \\
	\vdots  \\
		\sigma^n(W_{i,n}h_{i-1}(x) + b_{i,n})
	\end{pmatrix},
\end{equation}
for $i = 1,\cdots, L$.
In fact, for $i=1$, we have
\begin{equation}\label{key}
	\tilde h_1(x) =
\sum_{j=0}^n\tilde \alpha_{1,j}\red{\odot}\sigma^j (\tilde W_1 x + \tilde b_1)=
	\begin{pmatrix}
		\sigma^1(W_{1,0}x + b_{1,0}) \\
		\sigma^2(W_{1,1}x + b_{1,1}) \\
		\vdots  \\
		\sigma^n(W_{1,n}x + b_{1,n})
	\end{pmatrix}.
\end{equation}
Then, by induction we have
\begin{equation}\label{key}
	\begin{aligned}
	&\tilde h_i(x) =
	\sum_{j=0}^n\tilde \alpha_{i,j} \red{\odot}\sigma^j (\tilde W_i \tilde h_{i-1}(x)  + \tilde b_i)=
	\begin{pmatrix}
		\sigma^1(W_{i,0}\sum_{j=0}^n A_{i-1,j}[\tilde h_{i-1}(x)]_j + b_{i,0}) \\
		\sigma^2(W_{i,1}\sum_{j=0}^n A_{i-1,j}[\tilde h_{i-1}(x)]_j + b_{i,1}) \\
		\vdots  \\
		\sigma^n(W_{i,n}\sum_{j=0}^n A_{i-1,j}[\tilde h_{i-1}(x)]_j + b_{i,n})
	\end{pmatrix} \\
&=\begin{pmatrix}
	\sigma^1\left(W_{i,0}\left(\sum_{j=0}^n A_{i-1,j} \sigma^j(W_{i-1,j}h_{i-2}(x) + b_{i-1,j})\right)+ b_{i,0}\right) \\
	\sigma^2\left(W_{i,1}\left(\sum_{j=0}^n A_{i-1,j} \sigma^j(W_{i-1,j}h_{i-2}(x) + b_{i-1,j})\right) + b_{i,1}\right) \\
	\vdots  \\
	\sigma^n\left(W_{i,n}\left(\sum_{j=0}^n A_{i-1,j} \sigma^j(W_{i-1,j}h_{i-2}(x) + b_{i-1,j})\right) + b_{i,n}\right)
\end{pmatrix} = 	\begin{pmatrix}
\sigma^1(W_{i,0}h_{i-1}(x) + b_{i,0}) \\
\sigma^2(W_{i,1}h_{i-1}(x) + b_{i,1})  \\
\vdots  \\
\sigma^n(W_{i,n}h_{i-1}(x) + b_{i,n})
\end{pmatrix}
	\end{aligned}.
\end{equation}
Therefore, we have
\begin{equation}\label{key}
	\tilde f(x) = \tilde W_{L+1}\tilde h_L(x)
	+\red{\tilde b_{L+1}}= W_{L+1} \sum_{j=0}^n \alpha_{L,j} \red{\odot}\sigma^j(W_{L,j}h_{L-1}(x) + b_{L,j}) + \red{b_{L+1}} = W_{L+1} h_{L}(x) + \red{b_{L+1}}= f(x),
\end{equation}
which \red{finishes} the proof.
\end{proof}

\section{\red{Representation abilities} and approximation properties of PSENet}
In this section, we will discuss the \red{representation abilities} and approximation power of PSENet defined in (\ref{PSENet-1}) in comparison
with classical DNN under the ReLU activation function, ${\rm ReLU}(x)$, i.e.
\begin{equation}
	\sigma(x) = {\rm ReLU}(x) := \max \{ 0,x\}, \quad x \in \mathbb{R}.\nonumber
\end{equation}

\subsection{\red{One-hidden-layer PSENet}}
\red{Given Theorem~\ref{thm:equivalence}, we consider a typical generalized PSENet function (i.e. $\kappa = 1$) as
$$
f(x) = W_2 \left(  \sum_{j=0}^n\alpha_{1,j}\odot\sigma^j(W_{1,j}x+b_{1,j})\right).
$$
Since $W_2 \in \mathbb{R}^{1\times d_i}$, we can merge $W_1$ and $\alpha_{1,j}$ and rewrite $f(x)$ as
$$
f(x) = \sum_{j=0}^n\bar \alpha_{1,j} \sigma^j(W_{1,j}x+b_{1,j}),
$$
where $\bar \alpha_{1,j}\in \mathbb R^{1\times d_1}$ and $\bar \alpha_{1,j} \sigma^j(W_{1,j}x+b_{1,j})$ is defined by standard matrix multiplication.
Then, we may further generalize the above equation by taking $W_{1,j}\in \mathbb{R}^{m_j \times d}$ for $j=0,\cdots, n$ and for any $(m_0, m_1, \cdots, m_n) \in \mathbb{N}^n$. 
Then we denote the set of generalized one-hidden-layer PSENet function as
\begin{equation}\label{Vmn}
	V_{\bm m}^n = \left\{  f(x) = \sum_{j=0}^n \alpha_j \sigma^{j}(W_j x + b_j)~:~ W_j \in \mathbb{R}^{m_j \times d}, b_j \in \mathbb{R}^{m_j}, \alpha_j \in \mathbb{R}^{1\times m_j}\right\},
\end{equation}
for any $\bm m = (m_0, m_1, \cdots, m_n) \in \mathbb{N}^n$. Here we notice that there exist $\omega \in \mathbb R^d$ and $b\in \mathbb R$ such that $\alpha_0 \sigma^{0}(W_0 x + b_0) = \omega \cdot x + b$ is only a linear function on $\mathbb R^d$ no matter how big $m_0$ is. Thus, we always assume $m_0=1$ in $V_{\bm m}^n$.}

Next we show the \red{representation abilities} and approximation power in terms of the largest power $n$ and the number of neurons $\displaystyle|\bm m| = \sum_{j=0}^n m_j$.
\red{Since the activation function and its powers are ${\rm ReLU}^k$, it is natural to consider the connections between the PSENet $V_{\bm m}^n$ and the 
B-Spline function space on one-dimensional space.}
According to \cite{de1971subroutine},
a \red{one-dimensional} cardinal B-Spline of degree $n\ge 0$
denoted by $b^n(x)$ for \red{$x\in \mathbb R$}, can be written as
\begin{equation}\label{splinetorelu}
	b^n(x)=(n+1)\sum_{i=0}^{n+1} w_i\sigma^n(i-x) \text{~and~} w_i={\displaystyle\prod_{j=0,j\neq i}^{n+1}} \frac{1}{i-j},
\end{equation}
where $b^{n}(x)$ is supported on $x\in[0,n+1] \subset \mathbb R$ and $n\geq 1$. Moreover,  the cardinal B-Spline series of degree $n$ on the uniform grid with mesh size $h=\frac{1}{k+1}$ is defined as
\begin{equation}\label{Bkn}
	B_k^n=\Big\{v(x)=\sum_{j=-n}^{k}	c_jb^n_{j,h}(x)\Big\} \hbox{~where~} 	b^n_{j,h}(x)=b^{n}(\frac{x}{h}-j).
\end{equation}
Then we have the following lemma for the \red{representation abilities of $V_{\bm m}^n$ in terms of its connections with B-Spline function space. }
\begin{lemma}\label{lem:B-V} By choosing  \red{$m_i \ge k_i+i+1$ ($i=1,\cdots,n$)}, we have
	\begin{equation}
		\label{SV}
		\displaystyle\bigcup_{i=1}^n B_{k_i}^i\subset V_{\bm m}^n,
	\end{equation}
where $V_{\bm m}^n$ and $B_{k_i}^i$ are defined by \eqref{Vmn} and
	\eqref{Bkn}, respectively.
\end{lemma}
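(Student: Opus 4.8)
The plan is to exploit that the left-hand side is a union, so it suffices to prove $B_{k_i}^i \subseteq V_{\bm m}^n$ separately for each $i\in\{1,\dots,n\}$, and to realize every such element using \emph{only} the power-$i$ summand $\alpha_i\,\sigma^i(W_i x+b_i)$ of \eqref{Vmn} (taking $\alpha_j=0$ for $j\neq i$). This is exactly why only the single budget inequality $m_i\ge k_i+i+1$ is invoked at degree $i$. So I would fix $i$ and an arbitrary element $v=\sum_{j=-i}^{k_i}c_j b^i_{j,h}\in B_{k_i}^i$ with mesh $h=1/(k_i+1)$, and aim to write $v$ as a sum of at most $m_i$ terms of the form $\sigma^i(wx+b)$.

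First I would convert the B-spline series into truncated-power (ReLU$^i$) form. Applying \eqref{splinetorelu} to each $b^i_{j,h}(x)=b^i(x/h-j)$ and using the positive homogeneity $\sigma^i(cz)=c^i\sigma^i(z)$ with $c=1/h>0$ to absorb the mesh scaling, I would collect the shifted $\sigma^i$ terms by their breakpoints. On the (bounded) approximation domain $[0,1]$, only the $k_i$ interior breakpoints $mh$ ($m=1,\dots,k_i$) survive as genuine knots, while the remaining contribution is a single polynomial of degree at most $i$; hence $v$ admits, on $[0,1]$, the representation $v(x)=p(x)+\sum_{m=1}^{k_i}d_m\,\sigma^i(x-mh)$ with $p\in\mathcal P_i$. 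Equivalently, this is the standard fact that $B_{k_i}^i$ restricted to $[0,1]$ is the degree-$i$, $C^{i-1}$ spline space with truncated-power basis $\{1,x,\dots,x^i\}\cup\{\sigma^i(x-mh)\}_{m=1}^{k_i}$, of dimension $k_i+i+1$.

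Next I would spend $i+1$ further power-$i$ units to absorb the polynomial part. Choosing distinct knots $a_0<\dots<a_i<0$, on $[0,1]$ we have $\sigma^i(x-a_l)=(x-a_l)^i$, and the $i+1$ functions $(x-a_0)^i,\dots,(x-a_i)^i$ are linearly independent in $\mathcal P_i$, since the associated coefficient matrix is a (scaled) Vandermonde matrix in the distinct values $-a_l$. They therefore form a basis of $\mathcal P_i$, so $p(x)=\sum_{l=0}^{i}\gamma_l\,\sigma^i(x-a_l)$ on $[0,1]$. Combining the two groups gives, on $[0,1]$,
\[
v(x)=\sum_{l=0}^{i}\gamma_l\,\sigma^{i}(x-a_l)+\sum_{m=1}^{k_i}d_m\,\sigma^{i}(x-mh),
\]
a sum of exactly $(i+1)+k_i=k_i+i+1\le m_i$ terms of the form $\sigma^i(wx+b)$ with $w=1$. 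Packing the $m_i$ biases into $b_i$, the unit weights into $W_i\in\mathbb R^{m_i\times 1}$, and the coefficients $\{\gamma_l,d_m\}$ (zero-padded) into $\alpha_i\in\mathbb R^{1\times m_i}$, and setting $\alpha_j=0$ for $j\neq i$, exhibits $v\in V_{\bm m}^n$, as required.

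The hard part is the reduction in the first step. Globally on $\mathbb R$ the compactly supported spline $v$ has up to $k_i+2i+1$ active breakpoints, far more than $m_i$, so the argument genuinely relies on restricting to a bounded domain where only the $k_i$ interior knots remain, together with the observation that a degree-$i$ polynomial needs precisely $i+1$ extra units. It is the pairing of these two facts that makes the term count land exactly on $k_i+i+1$, matching the stated hypothesis; I would take care to state the domain explicitly and to verify the Vandermonde nonsingularity guaranteeing that the $i+1$ polynomial units suffice.
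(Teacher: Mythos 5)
Your proof is correct, and its skeleton matches the paper's: both reduce the union on the left-hand side of \eqref{SV} to the per-degree inclusion $B_{k_i}^i \subset V_{m_i}^i$, realized by switching off all powers $j \neq i$. (The paper phrases this reduction as the equality $V_{\bm m}^n = \bigcup_{i=0}^n V_{m_i}^i$, which is sloppy as stated---only the inclusion $\supseteq$ holds, and only that direction is needed---so your formulation with $\alpha_j = 0$ for $j \neq i$ is the cleaner one.) The genuine difference is in how the per-degree inclusion is handled: the paper simply cites Lemma 3.2 of \cite{xu2020finite}, which asserts $B_{m}^i \subset V_{m+i+1}^i$, whereas you re-derive that lemma from scratch, by converting the B-spline series to truncated-power form via \eqref{splinetorelu}, discarding all breakpoints outside the open unit interval through restriction to the bounded domain, and absorbing the residual degree-$i$ polynomial with $i+1$ extra units $\sigma^i(x-a_l)$, $a_l<0$, justified by a Vandermonde nonsingularity argument. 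What your route buys: it is self-contained, and it makes explicit a point the paper's statement glosses over, namely that \eqref{SV} holds only as an identity of functions restricted to the bounded domain---globally on $\mathbb{R}$ a generic element of $B_{k_i}^i$ has $k_i+2i+2$ active breakpoints (knots $mh$ for $m=-i,\dots,k_i+i+1$), each of which forces a separate ${\rm ReLU}^i$ unit, so the count $k_i+i+1$ would be false there. What the paper's route buys is brevity, at the cost of burying that subtlety inside the citation. One immaterial slip on your side: the global breakpoint count is $k_i+2i+2$, not $k_i+2i+1$; since this appears only in your closing side remark and not in the construction, it does not affect the argument.
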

\begin{proof}

We consider the so-called finite neuron methods~\cite{xu2020finite} with ${\rm ReLU}^k$ as the activation function and define
the one hidden layer neural network described in~\cite{xu2020finite} as
\begin{equation}\label{key}
	{V}_m^n := \left\{ f(x) ~:~ f(x) = \sum_{j=1}^m a_j \sigma^n(\omega_j \cdot x + b_j) \right\}.
\end{equation}
Obviously, we have
\begin{equation}\label{eq:VmVm}
V_{\bm m}^n  = \bigcup_{i=0}^n V_{m_i}^i.
\end{equation}
Lemma 3.2 in \cite{xu2020finite} shows that
\begin{equation}\label{eq:BmVm}
	B_{m}^i \subset {V}_{m+i+1}^i,
\end{equation}
then we complete the proof by combining \eqref{eq:VmVm} and \eqref{eq:BmVm}.
\end{proof}

\red{As a result of the above lemma}, we
have the following approximation result for $V_{\bm m}^n$.
\begin{theorem}[1D case]
Suppose $u\in H^{n+1}(\Omega)$ for a bounded domain $\Omega\subset \mathbb R$, we have
\begin{equation}
	\label{SVerror}
	\inf_{v\in V_{\bm m}^n}\|u-v\|_{s,\Omega}
	\lesssim \min_{i=1,2,\cdots, n} \left\{ m_i^{s-(i+1)} \|u\|_{i+1,\Omega} \right\},
\end{equation}
for any large enough $m_i > i+1$. 
\red{Here $H^{k}(\Omega)$ (or $W^{k,2}(\Omega)$) denotes the standard Sobolev space~\cite{adams2003sobolev} on $\Omega$ with norm $\|u\|_{k,\Omega}$.}
\end{theorem}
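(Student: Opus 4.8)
The plan is to reduce the approximation estimate over the PSENet space $V_{\bm m}^n$ to classical cardinal B-spline approximation, exploiting the space inclusion already established in Lemma~\ref{lem:B-V}. The key observation is that enlarging the trial space can only decrease the best-approximation error. Hence, fixing $\bm m$ and choosing for each degree $i \in \{1,\ldots,n\}$ an integer $k_i$ with $m_i \ge k_i + i + 1$ (so that Lemma~\ref{lem:B-V} applies and $B_{k_i}^i \subset V_{\bm m}^n$), I would first record
\begin{equation}
	\inf_{v \in V_{\bm m}^n} \|u - v\|_{s,\Omega} \le \inf_{v \in B_{k_i}^i} \|u - v\|_{s,\Omega}, \qquad i = 1,\ldots,n.
\end{equation}
Because this holds for every admissible $i$ with the same fixed $\bm m$, the left-hand side is bounded by the minimum over $i$ of the right-hand sides, which is exactly the source of the $\min_i$ in \eqref{SVerror}.

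Next I would invoke the classical Jackson-type error estimate for cardinal B-spline series. For the degree-$i$ spline space on a uniform grid of mesh size $h_i = \tfrac{1}{k_i+1}$ and a target $u \in H^{i+1}(\Omega)$, standard spline approximation theory yields
\begin{equation}
	\inf_{v \in B_{k_i}^i} \|u - v\|_{s,\Omega} \lesssim h_i^{\,(i+1)-s} \, \|u\|_{i+1,\Omega},
\end{equation}
valid for $0 \le s \le i$ (so that the degree-$i$ splines themselves lie in $H^s$) and using the chain $u \in H^{n+1}(\Omega) \subset H^{i+1}(\Omega)$. This is the single imported ingredient; everything else is bookkeeping on the mesh size.

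Finally I would translate the mesh size into the width parameter. Taking the largest grid permitted by Lemma~\ref{lem:B-V}, namely $k_i = m_i - i - 1$, gives $h_i = \tfrac{1}{m_i - i}$; for $m_i > i+1$ large enough one has $m_i - i \gtrsim m_i$, so that $h_i^{\,(i+1)-s} \lesssim m_i^{\,s-(i+1)}$ because the exponent $(i+1)-s$ is nonnegative. Substituting this scaling into the per-degree estimate produces $\inf_{v \in V_{\bm m}^n}\|u-v\|_{s,\Omega} \lesssim m_i^{\,s-(i+1)} \|u\|_{i+1,\Omega}$ for each $i$, and taking the minimum over $i = 1,\ldots,n$ completes the argument.

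The main obstacle is not the abstract reduction, which is immediate from Lemma~\ref{lem:B-V}, but rather pinning down the precise hypotheses under which the classical spline estimate applies: in particular the admissible range of the smoothness index $s$ relative to each degree $i$, and setting up the uniform grid underlying $B_{k_i}^i$ so that it covers the bounded domain $\Omega$ (by an affine rescaling to a reference interval). Care is also needed to verify that the constants hidden in $\lesssim$ depend only on $s$, $i$, and $\Omega$ but not on $m_i$, so that the asserted decay rate $m_i^{\,s-(i+1)}$ is genuine.
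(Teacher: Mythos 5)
Your proposal is correct and follows essentially the same route as the paper: both reduce the problem to B-spline approximation via the inclusion $\bigcup_{i=1}^n B_{m_i-i-1}^i \subset V_{\bm m}^n$ from Lemma~\ref{lem:B-V}, apply the known spline error estimate (the paper cites it from \cite{xu2020finite} directly in terms of $m_i$, while you state the Jackson-type bound in mesh size $h_i$ and convert), and conclude by taking the minimum over $i$. The extra bookkeeping you do with $h_i = 1/(m_i-i)$ and the constant-tracking caveats are fine but amount to the same argument.
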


\begin{proof}
  According to the error estimate of $B^i_{N}$ in~\cite{xu2020finite}, we have
  \begin{equation}
  	\inf_{v\in  B_{m_i-i-1}^i}\|u-v\|_{s,\Omega}  \lesssim m_i^{s-(i+1)}\|u\|_{i+1,\Omega}.
  \end{equation}
In addition, we have
\begin{equation}
		\displaystyle\bigcup_{i=1}^n B_{m_i - i-1}^i\subset V_{\bm m}^n,
\end{equation}
if $m_i >i+1$ in Lemma~\ref{lem:B-V}.
This indicates that
\begin{equation}
	\inf_{v\in V_{\bm m}^n}\|u-v\|_{s,\Omega} \le \inf_{v\in \cup_{i=1}^n B_{m_i-i-1}^i}\|u-v\|_{s,\Omega} \lesssim \min_{i=1,2,\cdots, n} \left\{ m_i^{s-(i+1)} \|u\|_{i+1,\Omega} \right\}.
\end{equation}

\end{proof}
\begin{remark}
\red{When} comparing with ${\rm ReLU}^n$-DNN ~\cite{xu2020finite}, the PSENet has the following
advantages:
\begin{enumerate}
	\item If we have no information about the regularity of the target function $u(x)$ a priori, the PSENet $V_{\bm m}^n$ gives
	an adaptive and uniform scheme for approximating any $u \in H^i(\Omega)$ for all $i \ge 1$. However, ${\rm ReLU}^n$-DNN can only
	work for $u \in H^i(\Omega)$ for $i \ge n$.
	\item 	By choosing $m_i = 0$ for $i < n$,  the PSENet $V_{\bm m}^n$ \red{recovers} the ${\rm ReLU}^n$-DNN exactly. Thus if $u(x) \in H^n(\Omega)$, then PSENet provides almost the same asymptotic convergence rate in terms of
	the number of hidden neurons $|\bm m|$ as the ${\rm ReLU}^n$-DNN ~\cite{xu2020finite}.

	\item If $u(x)$ is a smooth function,
	the PSENet $V_{\bm m}^n$ then provides a better approximation  than
 ${\rm ReLU}^n$-DNN when the number of neurons, $m$, is not large since
	$\|u\|_{i+1,\Omega}$ might be very large but $\left(\frac{|\bm m|}{n}\right)^{s-(i+1)}$ is not small enough.
\end{enumerate}
\end{remark}

Following the observation of \red{Lemma (3.10)} in \cite{xu2020finite}, we have
the next theorem about the \red{representation 
abilities of the PSENet in terms of its connections with polynomials on the multi-dimensional space}.
\begin{theorem}[Multi-dimensional case]\label{miltidim_spectral}For any polynomial $p(\bm x) = \displaystyle\sum_{|\alpha| \le k} a_\alpha \bm x^\alpha$ on $\mathbb{R}^d$,
	there exists a PSENet function $\hat p(x) =\displaystyle \sum_{j=0}^{k} c_j \sigma^{j}(W_j x + b_j)$ with $m_i \le 2 \binom{i+d-1}{i}$,
	such that
	\begin{equation}\label{key}
		\hat p(x) = p(x),
	\end{equation}
on $\mathbb{R}^d$.
\end{theorem}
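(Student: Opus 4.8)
The plan is to reduce the $d$-dimensional representation problem to the one-dimensional identity available for powers of ${\rm ReLU}$, by first writing $p$ as a sum of powers of linear forms and then converting each such power into ${\rm ReLU}^j$ neurons. First I would decompose $p$ into its homogeneous parts, $p(x) = \sum_{j=0}^{k} p_j(x)$ with $p_j(x) = \sum_{|\alpha|=j} a_\alpha x^\alpha$ homogeneous of degree $j$, and arrange the construction so that each $p_j$ is reproduced entirely by the $\sigma^j$ term of $\hat p$. The constant $p_0$ is produced by the $j=0$ term, since $\sigma^0$ is the identity: taking $W_0 = 0$, $b_0 = 1$ and $c_0 = a_0$ gives a single neuron, so $m_0 = 1$.

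The algebraic core is to represent each homogeneous $p_j$ (for $j \ge 1$) as a linear combination of $j$-th powers of linear forms, $p_j(x) = \sum_{l=1}^{M_j} \beta_l (w_l \cdot x)^j$, using as few terms as possible. I would show such a representation exists with $M_j \le N_j := \binom{j+d-1}{j}$, the dimension of the space $\mathcal{P}_j$ of homogeneous degree-$j$ polynomials on $\mathbb{R}^d$ (by the stars-and-bars count of monomials $x^\alpha$ with $|\alpha|=j$). The key point is that the powers $\{(w\cdot x)^j : w \in \mathbb{R}^d\}$ already span $\mathcal{P}_j$: by the multinomial theorem $(w\cdot x)^j = \sum_{|\alpha|=j}\binom{j}{\alpha} w^\alpha x^\alpha$, so if a linear functional $\phi$ with values $\phi(x^\alpha)=c_\alpha$ annihilates every such power, then $\sum_{|\alpha|=j}\binom{j}{\alpha} c_\alpha w^\alpha \equiv 0$ as a function of $w$, forcing $c_\alpha = 0$ by linear independence of the distinct monomials $w^\alpha$; hence $\phi = 0$ and the powers span. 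Consequently one may select $N_j$ of them forming a basis of $\mathcal{P}_j$ and expand $p_j$ in that basis, giving $M_j \le N_j$.

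Finally I would turn each power of a linear form into ${\rm ReLU}^j$ neurons via the elementary identity $t^j = \sigma^j(t) + (-1)^j \sigma^j(-t)$, valid for $\sigma = {\rm ReLU}$ because one of $\sigma(t), \sigma(-t)$ always vanishes (for $t>0$ the first term gives $t^j$; for $t<0$ the second gives $(-1)^j(-t)^j = t^j$). Substituting $t = w_l \cdot x$ replaces each $(w_l\cdot x)^j$ by the two zero-bias neurons $\sigma^j(w_l\cdot x)$ and $\sigma^j(-w_l\cdot x)$, so $p_j$ is realised by the $\sigma^j$ layer using at most $2 M_j \le 2 N_j = 2\binom{j+d-1}{j}$ neurons, i.e. $m_j \le 2\binom{j+d-1}{j}$. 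Summing the contributions over $j = 0,1,\dots,k$ assembles the PSENet function $\hat p(x) = \sum_{j=0}^{k} c_j \sigma^j(W_j x + b_j)$ (with $b_j = 0$ for $j \ge 1$) satisfying $\hat p \equiv p$ on $\mathbb{R}^d$ and the claimed width bound.

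The step I expect to be the main obstacle is the spanning and basis-selection argument: confirming that $j$-th powers of linear forms span all of $\mathcal{P}_j$ and that $\dim \mathcal{P}_j = \binom{j+d-1}{j}$, since this is precisely what produces the factor $\binom{i+d-1}{i}$ in the stated bound. Once this is in hand, the homogeneous splitting and the identity $t^j = \sigma^j(t) + (-1)^j\sigma^j(-t)$ reduce the remainder to routine bookkeeping of weights, biases, and the factor of $2$.
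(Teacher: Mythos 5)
Your proposal is correct and follows the same skeleton as the paper's proof: split $p$ into homogeneous parts, write each degree-$j$ part as a linear combination of $j$-th powers of linear forms, and convert each power $(w_l\cdot \bm x)^j$ into two neurons via the identity $t^j = \sigma^j(t) + (-1)^j\sigma^j(-t)$, which produces exactly the factor $2\binom{j+d-1}{j}$. The one genuine difference lies in how each argument certifies that powers of linear forms can reproduce every homogeneous polynomial of degree $j$. The paper stacks the candidate functions as $\bm W\bm X$ for an explicit $d_j\times d_j$ matrix $\bm W$ and invokes a generalized Vandermonde determinant identity to assert that $\bm W$ is invertible for appropriate directions $w_s$, deferring the actual choice of those directions to an external reference. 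You instead prove spanning by duality: a linear functional with values $c_\alpha$ on the monomial basis that annihilates every $(w\cdot \bm x)^j$ must, by the multinomial theorem and linear independence of the monomials $w^\alpha$ as functions of $w$, satisfy $\binom{j}{\alpha}c_\alpha = 0$ for all $|\alpha|=j$, hence vanish; a basis of size $\binom{j+d-1}{j}$ is then extracted from the spanning set abstractly. Your route is more elementary and self-contained — it needs no determinant identity and no outside construction — at the price of being non-constructive (it exhibits no concrete $w_l$), whereas the paper's route is constructive in spirit but, as written, leans on citations for precisely this step. Your remaining bookkeeping (the $j=0$ slot for the constant with $m_0=1$, zero biases for $j\ge 1$, coefficients and signs absorbed into the row vectors $c_j$) is consistent with the definition of $V^n_{\bm m}$ and with the stated width bound.
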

\begin{proof}
\red{Given} the connections between PSENet and $ReLU^k$-DNN in \eqref{eq:VmVm},
we only need to prove 
\red{$$
\left\{ \sum_{|\alpha| =i } a_\alpha \bm x^\alpha ~:~ a_\alpha \in \mathbb R\right\} \subset V_{m_i}^i\hbox{~and~} m_i= 2 \binom{i+d-1}{i}.
$$}
To prove that, we first recall the following property that
\begin{equation}\label{key}
	x^i = {\rm ReLU}^i(x) + (-1)^{i} {\rm ReLU}^i(-x).
\end{equation}
Moreover $d_i = \binom{i+d-1}{i}$ is the dimension of the space of
homogeneous polynomials on $\mathbb{R}^d$ with degree $i$.
Thus, we only need to prove that we can choose suitable $w_s \in \mathbb{R}^d$ for $s = 1:d_i$ such that
\begin{equation}\label{key}
(w_s \cdot \bm x)^i  = {\rm ReLU}^i(w_s \cdot x) + (-1)^{i} {\rm ReLU}^i(-w_s \cdot x) \in \text{PSENet},
\end{equation}
 forms a basis for homogeneous polynomials on $\mathbb{R}^d$ with degree $i$.
By denoting
\begin{equation}\label{key}
\bm X = (\bm x^{\alpha_1}, \bm x^{\alpha_2}, \cdots, \bm x^{\alpha_{d_i}})^T,
\end{equation}
as the natural basis for the space of homogeneous polynomials on $\mathbb{R}^d$ with degree $i$,
we have
\begin{equation}\label{key}
	((w_1 \cdot \bm x)^i, (w_2 \cdot \bm x)^i, \cdots, (w_{d_i} \cdot \bm x)^i   )^T = \bm W \bm X,
\end{equation}
where $\bm W \in \mathbb{R}^{d_i \times d_i}$ is a matrix formed by $w_1, w_2,\cdots, w_{d_i}$.
Based on the generalized Vandermonde \red{determinant} identity~\cite{yaacov2014multivariate}, we see that
$\bm W$ is an invertible matrix if we choose \red{appropriate} $w_s$.
Therefore $((w_1 \cdot \bm x)^i, (w_2 \cdot \bm x)^i, \cdots, (w_{d_i} \cdot \bm x)^i   )$ forms the basis for the
space of homogeneous polynomials on $\mathbb{R}^d$ with degree $i$.
\red{A more comprehensive description about how to choose $w_s$ to get an invertible $\bm W$ will be available in \cite{he2021dnn}.}
\end{proof}
\begin{remark}
\begin{enumerate}
  \item The total number of neurons is
\begin{equation}
	|\bm m| = \sum_{i=0}^k m_i =  \sum_{i=0}^k 2 \binom{i+d-1}{i} = 2 \binom{k+d}{k},\nonumber
\end{equation}
which equals the number of neurons of ${\rm ReLU}^k$-DNN to recover
polynomials with degree $k$ as shown in \cite{xu2020finite}.
Considering the spectral accuracy of polynomials for smooth functions in terms of
the degree $k$, the above representation theorem shows that PSENet can achieve
an exponential approximation rate for smooth functions with respect to $n$ in $V_{\bm m}^n$,
some similar results can be found in~\cite{e2018exponential, he2021approximation,opschoor2020deep,opschoor2019exponential,
	tang2019chebnet}

  \item PSENet can take a large degree $n$ to reproduce
high order polynomials instead of a deep network. But other networks need deep layers to improve the performance,
for example expressive power~\cite{arora2018understanding, he2021relu,he2020relu,montufar2014number, shen2019nonlinear,telgarsky2016benefits},
approximation properties~\cite{e2018exponential,guhring2020error,lu2017expressive, montanelli2019deep, opschoor2020deep,opschoor2019exponential,poggio2017and},
benefits for training~\cite{arora2018optimization} and etc.
\item \red{The main results in this subsection are established by combining the representation abilities of the ${\rm ReLU}^k$-DNN  $V^{k}_{m}$, i.e., its connection with B-Spline (Lemma 3.2 in \cite{xu2020finite}) and polynomials (Lemma 3.10 in \cite{xu2020finite}). Moreover, we also reveal the natural relation between the PSENet $V^{n}_{\bm m}$ and the ${\rm ReLU}^k$-DNN $V^{k}_{m}$  in \eqref{eq:VmVm}. In the following subsection, we show another core feature of the PSENet $V^{n}_{\bm m}$ that it can approximate singular function with optimal rate. This has not been studied in~\cite{xu2020finite}, and the PSENet can achieve a better approximation rate compared to the results presented in both ~\cite{he2021approximation} and~\cite{opschoor2020deep}.}
\end{enumerate}
\end{remark}

\subsection{Optimal approximation rate on singular functions by using PSENet}
We apply the PSENet  on the singular function approximation which has been widely studied in hp-FEM \cite{babuvska1994p, schwab1998numerical} and
 consider non-smooth functions in Gevrey class~\cite{chernov2011exponential, schwab1998numerical, opschoor2020deep} on $I = (0,1)$:
For any $\beta>0$, we define \red{the} function $\varphi_\beta (x) = x^{\beta}$ on $[0,1]$, the seminorm as
\begin{equation}
	|u|_{H_{\beta}^{k,\ell}(I)}:=||\varphi_{\beta+k-\ell}D^ku||_{L^2(I)},
\end{equation}
and the $H^{k,\ell}_\beta$ norm as
\begin{equation}
	||u||^2_{H_{\beta}^{k,\ell}(I)}:=\left\{
	\begin{split}
		&\sum\limits_{k'=0}^k |u|^2_{H_{\beta}^{k',0}(I)}, &if ~\ell=0,\\
		&\sum\limits_{k'=l}^k |u|^2_{H_{\beta}^{k',\ell}(I)}+||u||^2_{H^{\ell-1}(I)}, & if~ \ell\ge 1,
	\end{split}
	\right.
\end{equation}
where $\ell, k = 0,1,2,\cdots$.
For any $\delta\ge 1$ the Gevrey class $\mathcal{G}_{\beta}^{\ell,\delta}(I)$ is defined as the class of
functions $u\in \cap_{k\ge l}H_\beta^{k,l}(I)$ for which there exist $M,m>0$, such that
\begin{equation}
	\forall k\ge l:|u|_{H_{\beta}^{k,l}(I)}\le Mm^{k-l}((k-l)!)^{\delta}.
\end{equation}

When $d=1$, these function classes have a singular point at $x=0$, then \red{the} $hp$ finite element method has exponential convergence to \red{this} function class \cite{schwab1998numerical, gui1986theh}.

We consider the piece-wise polynomial space on mesh $\mathcal{T}_n: 0 = x_0<\cdots<x_n=1$ as
\begin{equation}
	\begin{split}
		P_{\bm p}(\mathcal{T}_n) = \{ p_h \mbox{ is continuous on } I ~|~p_h\mbox{ is a polynomial on grid }[x_{i-1},x_i]
		\mbox{ with degree } p^{(i)}  \},
	\end{split}
\end{equation}
and have the following estimate:
\begin{lemma}[\cite{opschoor2020deep}]\label{HPresult}
	Let $\sigma,\beta\in(0,1),\delta\ge 1,u\in \mathcal{G}_{\beta}^{2,\delta}(I)$ and $N\in \mathbb{N}$ be given. For $\mu_0=\mu_0(\sigma,\delta,m):=\max\{1,\frac{m}{2}(2e)^{1-\delta}\}$ and for any $\mu>\mu_0$, let $\bm p=(p^{(i)})_{i=1}^n\subset\mathbb{N}$ be defined as $p^{(1)}:=1$ and $p^{(i)}:=\lfloor\mu i^{\delta}\rfloor$ for $i\in\{2,...,n\}$.
	Then there exists $v(x) \in P_{\bm p}(\mathcal{T}_n)$ with $v(x_i) = u(x_i)$ and $x_i = \frac{1}{2^{n-i}}$ for $i\in \{1,...,n\}$
	such that for constants $C(\sigma, \beta, \delta, \mu, M, m), c(\beta, \delta)>0$ it holds that
	\begin{equation}
		||u-v||_{H^1(0,1)} \le C e^{-cn}.
	\end{equation}
\end{lemma}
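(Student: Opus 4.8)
The statement is the standard $hp$-finite element exponential-convergence estimate for a function with an $x^\beta$-type singularity at the origin, so the plan is to follow the classical $hp$ argument on the geometrically graded mesh $\mathcal T_n$ (here with grading ratio $\tfrac12$, since $x_i=2^{-(n-i)}$), splitting the error between the single element that touches the singularity and the remaining smooth elements. First I would construct $v$ explicitly: on each element $K_i=[x_{i-1},x_i]$ with $i\ge2$ take $v|_{K_i}$ to be a near-best degree-$p^{(i)}$ polynomial approximation of $u$ that interpolates the two endpoint values $u(x_{i-1}),u(x_i)$ (for instance a Gauss--Lobatto interpolant, or an $H^1$-projection corrected at the endpoints), and on $K_1=[0,x_1]$ take $v$ to be the linear nodal interpolant ($p^{(1)}=1$). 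Matching the nodal values forces continuity across shared nodes, so $v\in P_{\bm p}(\mathcal T_n)\cap H^1(0,1)$ and $v(x_i)=u(x_i)$ as required. It then suffices to bound $\|u-v\|_{H^1(0,1)}^2=\|u-v\|_{H^1(K_1)}^2+\sum_{i=2}^n\|u-v\|_{H^1(K_i)}^2$ element by element.

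On the smooth elements $K_i$ ($i\ge2$), which sit at distance $\sim x_i=2^{-(n-i)}$ from the origin, I would pull $u$ back to a reference interval and apply the one-dimensional $p$-version estimate for Gevrey-regular data. The Gevrey bound $|u|_{H^{k,\ell}_\beta}\le M m^{\,k-\ell}((k-\ell)!)^\delta$ yields, away from $0$, a best degree-$p$ approximation error of the Gevrey type $e^{-c\,p^{1/\delta}}$, while the weight $\varphi_\beta\sim x^\beta$ contributes an extra algebraic factor in $x_i$; schematically the $H^1$ error on $K_i$ is of the form $2^{-c_1(n-i)}\,e^{-c_2\,(p^{(i)})^{1/\delta}}$. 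The prescription $p^{(i)}=\lfloor\mu i^\delta\rfloor$ is chosen exactly so that $(p^{(i)})^{1/\delta}\sim\mu^{1/\delta}i$, turning the spectral factor into a genuine geometric decay $e^{-c_2' i}$ in the element index, and the threshold $\mu>\mu_0=\max\{1,\tfrac m2(2e)^{1-\delta}\}$ is precisely the condition (extracted via Stirling's formula from the Gevrey constant $m$) ensuring $c_2'>0$. Hence the squared error on $K_i$ is bounded by $2^{-c_1(n-i)}e^{-c_2' i}\le e^{-c'n}$, and summing over the $n-1$ smooth elements gives $\sum_{i=2}^n\|u-v\|_{H^1(K_i)}^2\le (n-1)\,e^{-c'n}\le C\,e^{-cn}$.

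On the singular element $K_1=[0,x_1]$, where only the linear interpolant is used, the unweighted Bramble--Hilbert bound fails because $u''\sim x^{\beta-2}$ is not square-integrable at $0$, so I would instead use a weighted (Hardy-type) interpolation estimate in the norm $\|\cdot\|_{H^{2,0}_\beta}$, whose weight $\varphi_{\beta+2}$ absorbs the singularity. Such an estimate controls $\|u-v\|_{H^1(K_1)}$ by a positive power of $x_1$ times $|u|_{H^{2,0}_\beta(K_1)}$; since $x_1=2^{-(n-1)}$, this contribution is at most $C\,2^{-\gamma(n-1)}\le C\,e^{-cn}$ for some $\gamma=\gamma(\beta)>0$. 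Combining the two bounds and taking square roots gives $\|u-v\|_{H^1(0,1)}\le C\,e^{-cn}$ with $C=C(\sigma,\beta,\delta,\mu,M,m)$ and $c=c(\beta,\delta)$, as claimed.

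The main obstacle is the uniform spectral estimate on the smooth elements: one must track the precise interplay between the factorial growth $((k-\ell)!)^\delta$ of the Gevrey class, the geometric contraction $x_i\sim2^{-(n-i)}$ of the mesh, and the degree growth $p^{(i)}\sim\mu i^\delta$, and verify that the threshold $\mu_0$ is exactly what forces the per-element exponent $c_2'$ to be strictly positive uniformly in $i$; pinning down the constants inside $\mu_0$ (rather than merely their existence) is the delicate part, and is where the specific value $\tfrac m2(2e)^{1-\delta}$ originates. A secondary difficulty is the weighted estimate on $K_1$, which requires Hardy-type inequalities adapted to $H^{2,0}_\beta$ rather than classical interpolation bounds. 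Both ingredients are the content of the $hp$-FEM exponential-convergence theory, so the statement follows from the construction in \cite{opschoor2020deep}, building on \cite{schwab1998numerical,gui1986theh}.
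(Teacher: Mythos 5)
The paper does not prove this lemma at all: it is imported verbatim, constants and all, from \cite{opschoor2020deep} (note the citation in the lemma header) and is then used as a black box in the proof of Theorem 3.3, so there is no internal proof to compare your attempt against. Your sketch is a faithful outline of how the result is actually established in the $hp$-FEM literature: the geometric mesh with grading ratio $1/2$, the split into the singular element $[0,x_1]$ (low-order interpolant plus a weighted Hardy-type estimate, giving a factor like $2^{-\gamma(n-1)}$) and the smooth elements (Gevrey spectral estimates of the form $e^{-c\,p^{1/\delta}}$, which the degree law $p^{(i)}=\lfloor\mu i^{\delta}\rfloor$ converts into geometric decay in the element index), followed by balancing the two exponents. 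You also correctly isolate the only genuinely delicate points --- the uniform tracking of the Gevrey constants, which is where the threshold $\mu_0=\max\{1,\frac{m}{2}(2e)^{1-\delta}\}$ originates, and the weighted estimate on the element touching the singularity --- and you defer both to \cite{opschoor2020deep}, which is exactly what the paper itself does, only more coarsely. So your proposal is not in conflict with the paper; it supplies the proof skeleton the paper omits. Two minor remarks: the parameters $\sigma$ and $N$ in the statement are vestigial, inherited from the more general formulation in the reference (where $\sigma$ is the mesh grading factor, here fixed at $1/2$), so your silent specialization is appropriate; and the lemma only requires interpolation at the nodes $x_i$ with $i\ge 1$, not at $x_0=0$, though since $u\in H^1(0,1)$ is continuous on $[0,1]$ in one dimension, your use of the value $u(0)$ on the first element is harmless.
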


Then we have the following lemma about the decomposition
properties of functions of PSENet in $P_{\bm p}(\mathcal{T}_n)$ with $p^{(i)} \le p^{(i+1)}$.
\begin{lemma}\label{lem:ph}
For any function $p_h \in P_{\bm p}(\mathcal{T}_n)$ with $p^{(i)} \le p^{(i+1)}$,
$p_h(x)$ can be reproduced by a \red{one-hidden-layer} PSENet, namely,
\begin{equation}\label{key}
	p_h(x) = \sum_{j=0}^{p^{(n)}} \alpha_j \sigma^{j}(W_j x + b_j), \quad \forall x \in [0,1]
\end{equation}
where $m_j \le n$.
\end{lemma}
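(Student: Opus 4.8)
The plan is to represent the continuous piecewise polynomial $p_h$ in the truncated power (spline) basis and then identify each truncated power with a single $\mathrm{ReLU}^k$ neuron. First I would build $p_h$ incrementally across the mesh: let $P_i$ denote the degree-$p^{(i)}$ polynomial that coincides with $p_h$ on $[x_{i-1},x_i]$, extended to all of $\mathbb{R}$. Starting from $P_1$ on the first interval, at each interior node $x_i$ ($i=1,\dots,n-1$) the function switches from $P_i$ to $P_{i+1}$, and continuity forces $(P_{i+1}-P_i)(x_i)=0$. Because the degrees are non-decreasing, $P_{i+1}-P_i$ is a polynomial of degree at most $p^{(i+1)}$ that vanishes at $x_i$, hence it is divisible by $(x-x_i)$ and expands as $\sum_{k=1}^{p^{(i+1)}} c_{i,k}(x-x_i)^k$. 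Adding the one-sided correction $\sum_{k=1}^{p^{(i+1)}} c_{i,k}(x-x_i)_{+}^{k}$, which vanishes on $[0,x_i]$, updates the representation from agreeing with $p_h$ on $[0,x_i]$ to agreeing on $[0,x_{i+1}]$, so a telescoping argument reconstructs $p_h$ on all of $[0,1]$.

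Next I would collect these corrections together with the first-piece polynomial into the single formula
\begin{equation}
p_h(x) = a_0 + \sum_{i=0}^{n-1}\sum_{k=1}^{p^{(i+1)}} c_{i,k}\,(x-x_i)_{+}^{k}, \qquad x\in[0,1],
\end{equation}
where $x_0=0$ and the $i=0$ block reproduces $P_1$. The crucial simplification is that on $[0,1]$ we have $x\ge 0$, so each truncated power is an exact $\mathrm{ReLU}^k$ neuron: $(x-x_i)_{+}^{k}=\sigma^{k}(x-x_i)$ with weight $1$ and bias $-x_i$. In particular the first-piece monomials satisfy $x^k=\sigma^k(x-x_0)$, so no reflected neuron $\sigma^k(-x)$ (as used in Theorem~\ref{miltidim_spectral}) is needed here. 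The constant $a_0$ is absorbed into the $j=0$ (affine) term, for which $m_0=1$. Thus $p_h$ lies in the one-hidden-layer PSENet space with maximal power $p^{(n)}=\max_i p^{(i)}$, matching the upper limit of the stated sum.

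Finally I would count the neurons of each power. For a fixed $j\ge 1$, a neuron $\sigma^{j}(x-x_i)$ appears in the formula above exactly when $p^{(i+1)}\ge j$; since the degrees are non-decreasing, the number of indices $\ell\in\{1,\dots,n\}$ with $p^{(\ell)}\ge j$ is at most $n$, whence $m_j\le n$, while $m_0=1\le n$. I expect the main point to get right — and the only place the hypothesis $p^{(i)}\le p^{(i+1)}$ is genuinely used — to be the degree bound on $P_{i+1}-P_i$: monotonicity guarantees that the correction at $x_i$ never introduces a power exceeding $p^{(i+1)}$, which is precisely what keeps the per-power count at $n$ rather than $n+1$. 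The remaining steps (divisibility by $(x-x_i)$, the vanishing of $(x-x_i)_{+}^{k}$ on $[0,x_i]$, and identifying $\sigma^k(x)$ with $x^k$ for $x\ge 0$) are routine verifications.
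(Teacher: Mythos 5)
Your proposal is correct and follows essentially the same route as the paper's proof: both telescope across the mesh by taking differences of consecutive polynomial pieces, use continuity to get divisibility by $(x-x_i)$ and the non-decreasing degrees to bound the degree of each correction by $p^{(i+1)}$, and then identify the resulting truncated powers $(x-x_i)_{+}^{k}$ with single $\sigma^k$ neurons, yielding $m_j\le n$ per power and $m_0=1$ for the constant. Your write-up is in fact slightly cleaner than the paper's (which phrases the same telescoping via indicator functions $\chi_{\tilde I_i}$ and contains a small typo in its recursion $\tilde p_{h,i}=p_{h,i}-\tilde p_{h,i-1}$, where $p_{h,i-1}$ is intended), but the underlying argument is identical.
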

\begin{proof}
First, we can write $p_h(x)$ as
\begin{equation}\label{key}
	p_h(x) - p_h(0) = \sum_{i=1}^n \chi_{I_i}(x) p_{h,i}(x),
\end{equation}
where $\chi_{I_i}(x)$ is the indicator function of $I_i = [x_{i-1}, x_i)$ for $i=1,\cdots, n-1$, and $I_n = [x_{n-1}, x_n]$. Here $p_{h,i}(x)$ is the polynomial of $p_h(x)$ on $I_i$ with degree $p^{(i)}$. Thanks to the property that $p^{(i)} \le p^{(i+1)}$, we re-write $p_h(x)$ as
\begin{equation}\label{key}
	p_h(x) - p_h(0) = \sum_{i=1}^n \chi_{\tilde I_i}(x) \tilde p_{h,i}(x),
\end{equation}
where $\tilde I_i = [x_{i-1}, 1]$ and $\tilde p_{h,i}(x)$ is a polynomial of degree $p^{(i)}$ defined as
\begin{equation}\label{key}
	\tilde p_{h,i}(x) = p_{h,i}(x) - \tilde p_{h,i-1}(x), \quad i = 2, 3, \cdots,n,
\end{equation}
with $\tilde p_{h,1}(x) = p_{h,1}(x)$. In addition, we have
\begin{equation}\label{key}
	\tilde p_{h,i}(x) = \sum_{j=1}^{p^{(i)}} \tilde a^{(i)}_j(x-x_{i-1})^j,
\end{equation}
\red{because of} the continuity of $p_{h}(x)$ on $[0,1]$.
Based on the above property and definition of indicator function $\chi_{\tilde I_i}(x)$, we have
\begin{equation}\label{key}
\chi_{\tilde I_i}(x) \tilde p_{h,i}(x) = \sum_{j=1}^{p^{(i)}} \tilde a^{(i)}_j \sigma^j (x-x_{i-1}),
\end{equation}
on $[0,1]$.
That finishes the proof.
\end{proof}
This lemma \red{is different from} results in~\cite{opschoor2020deep, xu2020finite, he2021approximation} since it
can achieve an exact representation formula for any piecewise polynomials while \red{their results} can only construct
the representation globally or establish some approximation results for \red{the} piecewise case.

Based on these two lemmas, we have the following main theorem for approximation property of
PSENet for Gevrey class.
\begin{theorem}\label{ConvergeResult}
	For all $\delta\ge 1$, $\beta\in(0,1)$, and $u\in \mathcal{G}_{\beta}^{2,\delta}(I)$, there exists
	a PSENet function $\hat u(x)$ with one hidden layer
	 such that
	\begin{equation}
		||u-\hat u||_{H^1(0,1)}\le C_0e^{-C_1 |\bm m|^{\frac{1}{\delta+1}}},
	\end{equation}
	where \red{$ \displaystyle|\bm m|=\sum_{j=0}^{p^{(n)}} m_j$} for $\hat u(x)$, and $C_0(\sigma, \beta, \delta, \mu, M, m)$ and $C_1(\beta, \delta)$ only depend on the function $u(x)$ \red{,} similar to Lemma~\ref{HPresult}.
\end{theorem}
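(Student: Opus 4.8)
The plan is to use the two preceding lemmas as black boxes and to reduce the entire theorem to translating the mesh/degree parameter $n$ of the $hp$-FEM approximant into the neuron count $|\bm m|$ of the resulting PSENet. First I would invoke Lemma~\ref{HPresult}: for $u\in\mathcal{G}_\beta^{2,\delta}(I)$ I fix $\mu>\mu_0$ and take the degree profile $p^{(1)}=1$, $p^{(i)}=\lfloor \mu i^\delta\rfloor$ on the geometrically graded mesh $x_i=2^{-(n-i)}$, which produces a piecewise polynomial $v\in P_{\bm p}(\mathcal{T}_n)$ with
$$
\|u-v\|_{H^1(0,1)}\le C e^{-cn}.
$$
I would then set $\hat u:=v$ and point out that, because the next step is an \emph{exact} representation, the claimed bound reduces entirely to rewriting $e^{-cn}$ in terms of $|\bm m|$; no further approximation error is incurred.

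Next I would check that the degree profile is monotone, $p^{(i)}\le p^{(i+1)}$, so that Lemma~\ref{lem:ph} is applicable. For $i\ge 2$ this is immediate since $i\mapsto \lfloor \mu i^\delta\rfloor$ is nondecreasing when $\delta\ge 1$ (monotonicity of $i^\delta$ and of the floor); the boundary case $p^{(1)}=1\le p^{(2)}=\lfloor\mu 2^\delta\rfloor$ holds because $\mu>\mu_0\ge 1$ and $\delta\ge 1$ force $\mu 2^\delta>2$. With monotonicity established, Lemma~\ref{lem:ph} furnishes an exact one-hidden-layer PSENet representation
$$
\hat u(x)=v(x)=\sum_{j=0}^{p^{(n)}}\alpha_j\sigma^j(W_j x+b_j),\qquad x\in[0,1],
$$
carrying the crucial width guarantee $m_j\le n$ for every power $j$. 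Consequently $\|u-\hat u\|_{H^1(0,1)}=\|u-v\|_{H^1(0,1)}\le C e^{-cn}$ with no loss.

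The one genuinely quantitative step is the neuron count. Since the powers run over $j=0,\dots,p^{(n)}$ with $p^{(n)}=\lfloor\mu n^\delta\rfloor$ and each $m_j\le n$, the total number of neurons satisfies
$$
|\bm m|=\sum_{j=0}^{p^{(n)}}m_j\le (p^{(n)}+1)\,n\le (\mu+1)\,n^{\delta+1},
$$
where I used $n^\delta\ge 1$ to absorb the $+1$. Inverting gives $n\ge (\mu+1)^{-1/(\delta+1)}|\bm m|^{1/(\delta+1)}$, and substituting into the exponential bound yields
$$
\|u-\hat u\|_{H^1(0,1)}\le C e^{-cn}\le C_0\, e^{-C_1|\bm m|^{1/(\delta+1)}},
$$
with $C_0=C$ and $C_1=c(\mu+1)^{-1/(\delta+1)}$, both depending only on $u$ through the Gevrey constants as in Lemma~\ref{HPresult}.

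I expect the proof to be short, since the heavy lifting is done by Lemmas~\ref{HPresult} and~\ref{lem:ph}; the only place needing care is the exponent bookkeeping. The exponent $\delta+1$ in the final rate is the product of two independent scalings---the degree $p^{(n)}\sim n^\delta$ contributes $\delta$ and the per-power width $m_j\le n$ contributes the extra $1$---so the main point to get right is that these multiply rather than add, and that the constant $C_1$ is tracked through the $(\delta+1)$-th root so that it stays independent of $n$ (equivalently of $|\bm m|$).
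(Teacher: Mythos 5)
Your proposal is correct and follows essentially the same route as the paper's own proof: invoke Lemma~\ref{HPresult} for the $hp$-FEM approximant, represent it exactly by a one-hidden-layer PSENet via Lemma~\ref{lem:ph}, and convert $e^{-cn}$ into a bound in $|\bm m|$ using $|\bm m|\le (p^{(n)}+1)n\lesssim \mu n^{\delta+1}$. In fact you are slightly more careful than the paper, which omits both the verification that $p^{(i)}\le p^{(i+1)}$ (needed to apply Lemma~\ref{lem:ph}) and the explicit inversion giving $C_1=c(\mu+1)^{-1/(\delta+1)}$.
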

\begin{proof}

 For any function
$u(x) \in  \mathcal{G}_{\beta}^{2,\delta}(I)$, Lemma~\ref{HPresult} shows that
there exists $u_h(x) \in P_{\bm p}(\mathcal{T}_n)$
such that
\begin{equation}\label{key}
	||u-u_h||_{H^1(0,1)} \le C e^{-cn},
\end{equation}
with $p^{(1)}:=1$ and $p^{(i)}:=\lfloor\mu i^{\delta}\rfloor$ for $i\in\{2,...,n\}$.
According to Lemma~\ref{lem:ph}, there exists a PSENet function $\hat u(x)$ with
one hidden layer and $m_j \le p^{(n)}-j+1$ such that $\hat u(x) = u_h(x)$ on $[0,1]$,
i.e.
\red{
\begin{equation}\label{key}
\|u - \hat u\|_{H^1(0,1)}	= \|u-u_h\|_{H^1(0,1)}  \le C e^{-cn}.
\end{equation}
}
Then, it is easy to obtain the final approximation rate since
\begin{equation}\label{key}
	|\bm m| = \sum_{j=0}^{p^{(n)}} m_j \lesssim \mu n^{\delta + 1}.
\end{equation}

\end{proof}

This approximation result \red{achieves} a better convergence rate in comparison with
results in \cite{he2021approximation} and ~\cite{opschoor2020deep}, whose
rates are $C_0e^{-C_1M^{\frac{1}{2\delta+1}}}$ or $C_0e^{-C_1M^{\frac{1}{3\delta+1}}}$, respectively.
Furthermore, this result is optimal since it shares the same order with the most general approximation result of piecewise polynomials.

\section{Numerical results}
In this section,  we compare the PSENet with ResNet on both fully connected and convolutional neural networks by using the ReLU activation function.

\subsection{Function approximation}
We first compare the PSENet with fully connected neural networks and ResNet to approximate $y=sin(n\pi x)$ on $[0,1]$ and $y=\sin(n\pi (x_1+x_2))$ on $[0,1]\times[0,1]$ with \red{single, two and three hidden layers.} 
 We train the neural networks with uniform grid points with a $0.01$ mesh size for both functions and use Adam training algorithm with a fixed learning rate $0.01$.
The training loss for different neural networks are compared and \red{shown} in Table \ref{table:1d_1hidden} which demonstrates that PSENet has a better approximation ability in comparison with the other two networks with the optimal degree shown \red{.}  
%
Secondly, we consider \red{a} 1D function $f(x)=x^\alpha$ with $\alpha\in(0,1)$ on $x\in[0,1]$, where $x=0$ is a singularity. \red{From} the theoretical analysis in Section 3.2, the PSENet can achieve \red{a better} approximation rate \red{when compared to} ReLU$^k$-DNN which is confirmed in Table \ref{table:singular}.

\begin{table}[h]
		\scriptsize
		\centering
	\caption{The comparison between PSENet and fully connected neural networks and ResNet on the training loss of $f(x)=\sin(n\pi x)$ on [0,1] and $y=\sin(n\pi (x_1+x_2))$ on $[0,1]\times[0,1]$. The number of neurons on each layer is 10. The best approximation accuracy for PSENet with different degree $n$ is highlighted. }
		\begin{tabular}{|c|c|c|c|c|c|c|c|c|}
			\toprule
			& Function & FC &  ResNet & \multicolumn{5}{|c|}{PSENet}\\
			\cmidrule{5-9}
			& &  & & n=1 & n=2 & n=3 & n=4 & n=5\\
			\midrule
			\multirow{3}{*}{1-hidden-layer} &$\sin(3\pi x)$ & $2\times 10^{-1}$ & $2\times 10^{-1}$ & $2\times 10^{-1}$ & $1\times 10^{-1}$ & $2\times 10^{-1}$ & $1 \times 10^{-1}$ & $\bm{6 \times 10^{-3}}$\\
			  \cmidrule{2-9}
			   &$\sin(4\pi x)$ & $3\times 10^{-1}$ & $2\times 10^{-1}$ & $3\times 10^{-1}$ & $4\times 10^{-1}$ & $2\times 10^{-1}$ & $\bm{1 \times 10^{-1}}$ & $2 \times 10^{-1}$\\
			   \cmidrule{2-9}
			   &$\sin(5\pi x)$ & $2\times 10^{-1}$ & $3\times 10^{-1}$ & $2\times 10^{-1}$ & $2\times 10^{-1}$ & $3\times 10^{-1}$ & $1 \times 10^{-1}$ & $\bm{5 \times 10^{-2}}$\\
			\midrule
			\multirow{3}{*}{2-hidden-layer} &  $\sin(3\pi x)$ & $3\times 10^{-3}$ & $4\times 10^{-3}$ & $3\times 10^{-3}$ & $2\times 10^{-1}$ & $\bm{2\times 10^{-3}}$ & $4 \times 10^{-3}$ & $3 \times 10^{-3}$\\
			  \cmidrule{2-9}
			   &$\sin(4\pi x)$ & $2\times 10^{-1}$ & $3\times 10^{-1}$ & $2\times 10^{-1}$ & $3\times 10^{-1}$ & $1\times 10^{-2}$ & $\bm{3 \times 10^{-4}}$ & $2 \times 10^{-2}$\\
\cmidrule{2-9}
			   &$\sin(5\pi x)$ & $2\times 10^{-1}$ & $1\times 10^{-1}$ & $1\times 10^{-1}$ & $1\times 10^{-1}$ & $1\times 10^{-1}$ & $3 \times 10^{-1}$ & $\bm{4 \times 10^{-3}}$\\
			   \midrule
			\multirow{3}{*}{3-hidden-layer} &  $\sin(3\pi x)$ & $2\times 10^{-1}$ & $1\times 10^{-3}$ & $3\times 10^{-3}$ & $1\times 10^{-3}$ & $3\times 10^{-3}$ & $8 \times 10^{-4}$ & $\bm{6 \times 10^{-4}}$\\
			  \cmidrule{2-9}
			   &$\sin(4\pi x)$ & $3\times 10^{-1}$ & $3\times 10^{-1}$ & $2\times 10^{-1}$ & $3\times 10^{-1}$ & $\bm{1\times 10^{-2}}$ & $2 \times 10^{-1}$ & $1 \times 10^{-1}$\\
\cmidrule{2-9}
			   &$\sin(5\pi x)$ & $3\times 10^{-1}$ & $1\times 10^{-1}$ & $3\times 10^{-1}$ & $1\times 10^{-1}$ & $\bm{3\times 10^{-3}}$ & $1 \times 10^{-1}$ & $1 \times 10^{-1}$\\
			 \multirow{3}{*}{2-hidden-layer}  & $\sin(3\pi (x_1+x_2))$ & $2\times 10^{-1}$ & $7\times 10^{-2}$ & $2\times 10^{-2}$ & $1\times 10^{-2}$ & \bm{$6\times 10^{-3}}$ & $7 \times 10^{-2}$ & $5 \times 10^{-2}$\\
			  \cmidrule{2-9}
			   &$\sin(4\pi (x_1+x_2))$ & $4\times 10^{-1}$ & $1\times 10^{-1}$ & $2\times 10^{-1}$ & $5\times 10^{-2}$ & $4\times 10^{-1}$ & $\bm{1 \times 10^{-2}}$ & $4 \times 10^{-1}$\\
			    \cmidrule{2-9}
			   &$\sin(5\pi (x_1+x_2))$ & $3\times 10^{-1}$ & $4\times 10^{-1}$ & $4\times 10^{-1}$ & $4\times 10^{-1}$ & $4\times 10^{-1}$ & $4 \times 10^{-1}$ & \bm{$2 \times 10^{-1}$}\\
			   \midrule
			   \multirow{3}{*}{3-hidden-layer}  & $\sin(3\pi (x_1+x_2))$ & $1\times 10^{-1}$ & $1\times 10^{-1}$ & $1\times 10^{-1}$ & $\bm{2\times 10^{-3}}$ & $2\times 10^{-2}$ & $8 \times 10^{-3}$ & $1 \times 10^{-2}$\\
			  \cmidrule{2-9}
			   &$\sin(4\pi (x_1+x_2))$ & $3\times 10^{-1}$ & $1\times 10^{-1}$ & $1\times 10^{-1}$ & $8\times 10^{-2}$ & $\bm{7\times 10^{-2}}$ & $1 \times 10^{-1}$ & $1 \times 10^{-1}$\\
			    \cmidrule{2-9}
			   &$\sin(5\pi (x_1+x_2))$ & $3\times 10^{-1}$ & $4\times 10^{-1}$ & $4\times 10^{-1}$ & $3\times 10^{-1}$ & $3\times 10^{-1}$ & $3 \times 10^{-1}$ & \bm{$5 \times 10^{-2}$}\\
			\bottomrule
			\bottomrule
		\end{tabular}
	\label{table:1d_1hidden}
\end{table}

\begin{table}[h]
		\tiny
		\centering
		\begin{tabular}{|c|c|c|c|c|c|c|c|c|c|c|c|}
			\toprule
			 $\alpha$ &  ResNet&\multicolumn{5}{|c|}{$ReLU^k$ Network} & \multicolumn{5}{|c|}{PSENet}\\
			\cmidrule{3-12}
			 & &k=1 & k=2 & k=3 & k=4 & k=5 &n=1 & n=2 & n=3 & n=4 & n=5\\
			\midrule
			 $2/3$& $3.6\times 10^{-2}$ &$3.0\times 10^{-2}$&$2.6\times 10^{-2}$ &$1.3\times 10^{-2}$ & $3.2\times 10^{18}$ &NaN & $\bm{8.1\times 10^{-3}}$ & $1.1\times 10^{-2}$ & $1.5\times 10^{-2}$ &$9.3\times 10^{-3}$  & $1.3\times 10^{-2}$ \\			\midrule
			 $3/4$& $2.8\times 10^{-3}$ & $1.0\times 10^{-2}$& $7.2\times 10^{-3}$ & $5.8\times 10^{-3}$ & $3.6\times 10^{33}$ & NaN  &$2.9\times 10^{-3}$ &$2.9\times 10^{-3}$ & $\bm{2.7\times 10^{-3}}$ & $4.5\times 10^{-3}$ & $2.8\times 10^{-3}$ \\			\midrule
			 $4/5$& $1.7\times 10^{-3}$& $4.2\times 10^{-3}$ & $2.9\times 10^{-3}$& $2.5\times 10^{-3}$ & $1.8\times 10^{13}$ & NaN & $1.5\times 10^{-3}$& $\bm{1.0\times 10^{-3}}$ & $1.3\times 10^{-3}$ & $4.0\times 10^{-3}$ &$1.6\times 10^{-3}$\\			\bottomrule
		\end{tabular}
\caption{Accuracy comparison of $\int_0^1 (N(x)-f(x))^2+ (N'(x)-f'(x))^2dx $ with $f(x)=x^\alpha$ with $\alpha\in(0,1)$ on $x\in[0,1]$.\red{(NaN stands for Not a number and indicates the training failure.)}}
	\label{table:singular}
\end{table}

\subsection{Comparison with ResNets on different datasets}
We compare the PSENet with different ResNets on CIFAR-10 and CIFAR-100. 
All the deep residual network \red{architectures} considered in our experiment are reported in \cite{he2016deep}. We use the hyperparameters shown in Table \ref{Resnet} \red{to} train the ResNets on CIFAR-10, CIFAR-100, and ImageNet datasets.
Results in Tables \ref{table:cifar_10} and \ref{table:cifar_100} show that the PSENet  achieves better \red{accuracy} rates than ResNet with the same number of layers.
Moreover, the PSENet  achieves better \red{accuracy} rates than ResNet with shallow networks and keeps comparable \red{accuracy} rates with deep networks. 
\red{On the ImageNet dataset , PSENet has a lower error rate on shallow network, such as those with 18 layers, and has a comparable error rate on deep networks, such as those with 34 layers, as shown in Fig. \ref{fig2}.}

\begin{table}[h]
	\centering
\scriptsize
	\begin{tabular}{|c|c|c|}
		\toprule
		Parameter & CIFAR-10 $\&$ CIFAR-100 & ImageNet\\
		\midrule
		Data Augmentation & \{RandomHorizontalFlip $\&$ RandomCrop\} & \{RandomHorizontalFlip $\&$ RandomResizedCrop\}\\
		Number of epochs & 250 & 90\\
		Batch size & 128 & \{ResNet-50: 128, ResNet-34: 256\}\\
		Initial learning rate & 0.2 & 0.1\\
		Learning rate schedule & Decrease by half every 30 epochs & Decrease by $1/10$ every 30 epochs\\
		Bias initialization & Both & False\\
		Number of runs & 10 & 1\\\hline
		Batch normalization & \multicolumn{2}{|c|}{True}\\
		Weight decay & \multicolumn{2}{|c|}{$5\times 10^{-4}$}\\
		Optimizer & \multicolumn{2}{|c|}{SGD with momentum = 0.9} \\
		\bottomrule
	\end{tabular}
\caption{Hyperparameters for the residual networks on CIFAR-10/CIFAR-100 and ImageNet datasets.}
		\label{Resnet}
\end{table}

\begin{table}[h]
		\scriptsize
		\centering
	\caption{Comparison {of percent accuracy} between PSENet and ResNet on the CIFAR-10 dataset with different numbers of layers}
		\begin{tabular}{|c|c|c|c|c|c|c|c|}
			\toprule
			 Number of layers & Coefficient kernel size &  ResNet & \multicolumn{5}{|c|}{PSENet}\\
			\cmidrule{4-8}
			 & & &n=1 & n=2 & n=3 & n=4 & n=5\\
			\midrule
			  4 & 3$\times$3 conv& 76.39\% & 79.44\% & 80.18\% & 80.34\% & {\bf 81.02\%} & 80.37\%\\
			  \midrule
			   6 & 3$\times$3 conv& 83.14\% & 85.40\% & 87.31\% & 87.45\% & {\bf 87.85\%} & 87.78\%\\
			    \midrule
			   8 & 3$\times$3 conv& 87.54\% & 88.79\% & 90.58\% & {\bf 90.76\%} & 90.75\% & 90.66\%\\
			   \midrule
			   14 & 3$\times$3 conv& 91.15\% & 91.86\% & 93.35\% & {\bf 93.50\%} & 93.15\% & 93.31\%\\
			   \midrule
			   20 & 3$\times$3 conv& 92.55\% & 92.82\% & {\bf 93.60\%} & 93.33\% & 92.98\% & 92.22\%\\
			    \midrule
			   26 & 3$\times$3 conv& 93.40\% & 93.40\% & {\bf 93.64\%} & 93.10\% & 93.06\% & 92.39\%\\
			   \midrule
			   56 & 3$\times$3 conv& 94.16\% & 94.16\% & 93.87\% & {\bf 94.16\%} & 93.58\% & 93.15\%\\
			   \midrule
			   110 & 1$\times$1 conv& 94.38\% & 94.38\% & {\bf 94.53\%} & 93.93\% & 93.80\% & 92.02\%\\
			\bottomrule
		\end{tabular}
	\label{table:cifar_10}
\end{table}

\begin{table}[h]
		\scriptsize
		\centering
	\caption{Comparison \red{of percent accuracy} between PSENet and ResNet on the CIFAR-100 dataset with different numbers of layers}
		\begin{tabular}{|c|c|c|c|c|c|c|c|}
			\toprule
			 Numbers of layers & Coefficient kernel size &  ResNet & \multicolumn{5}{|c|}{PSENet}\\
			\cmidrule{4-8}
			 & & &n=1 & n=2 & n=3 & n=4 & n=5\\
			\midrule
			  4 & 3$\times$3 conv & 47.61\% & 48.82\% & 51.10\% & 51.01\% & {\bf 51.53\%} & 51.03\%\\
			  \midrule
			   6 & 3$\times$3 conv & 52.60\% & 55.15\% & 59.33\% & 59.61\% & 59.38\% & {\bf 59.71\%}\\
			    \midrule
			   8 & 3$\times$3 conv & 59.43\% & 62.48\% & 65.90\% & {\bf 66.81\%} & 66.36\% & 66.50\%\\
			   \midrule
			   14 & 3$\times$3 conv & 67.01\% & 67.62\% & {\bf 69.09\%} & 69.05\% & 68.81\% & 68.22\%\\
			   \midrule
			   20 & 3$\times$3 conv & 68.34\% & 68.55\% & {\bf 69.61\%} & 68.93\% & 67.32\% & 65.58\%\\
			    \midrule
			   26  & 3$\times$3 conv & 69.03\% & 67.51\% & {\bf 69.52\%} & 67.38\% & 65.55\% & 61.40\%\\
			   \midrule
			   56  & scalar & 72.70\% & 71.60\% & 71.92\% & 71.91\% & 72.36\% & 72.45\%\\
			   \midrule
			   110  & scalar & 73.56\% & {\bf 74.29\%} & 73.04\% & 74.04\% & 74.26\% & 73.01\%\\
			\bottomrule
		\end{tabular}
	\label{table:cifar_100}
\end{table}

\begin{figure}[h]
	\centering
	\includegraphics[width=0.45\textwidth]{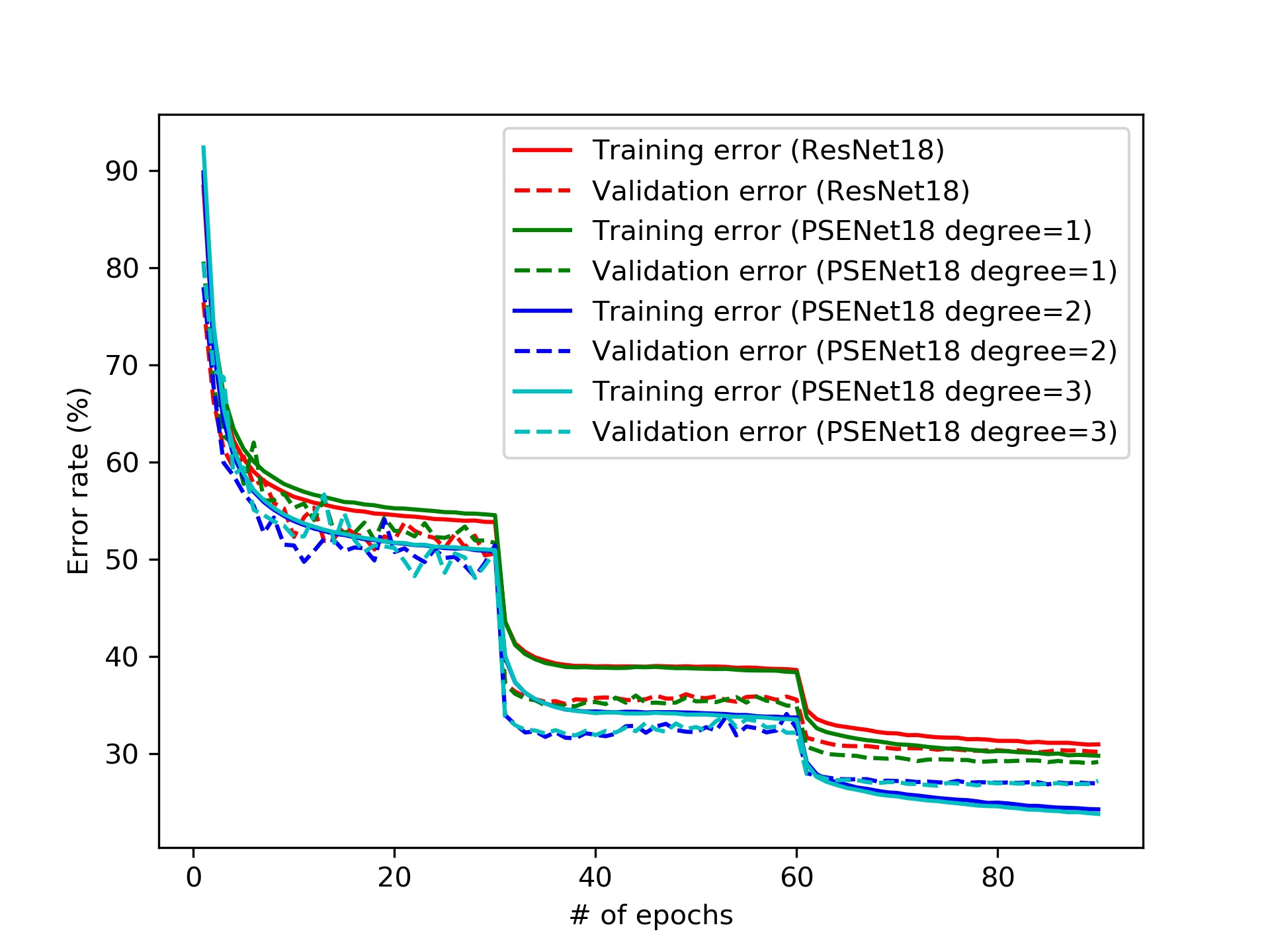}\label{resnet18}
	\includegraphics[width=0.45\textwidth]{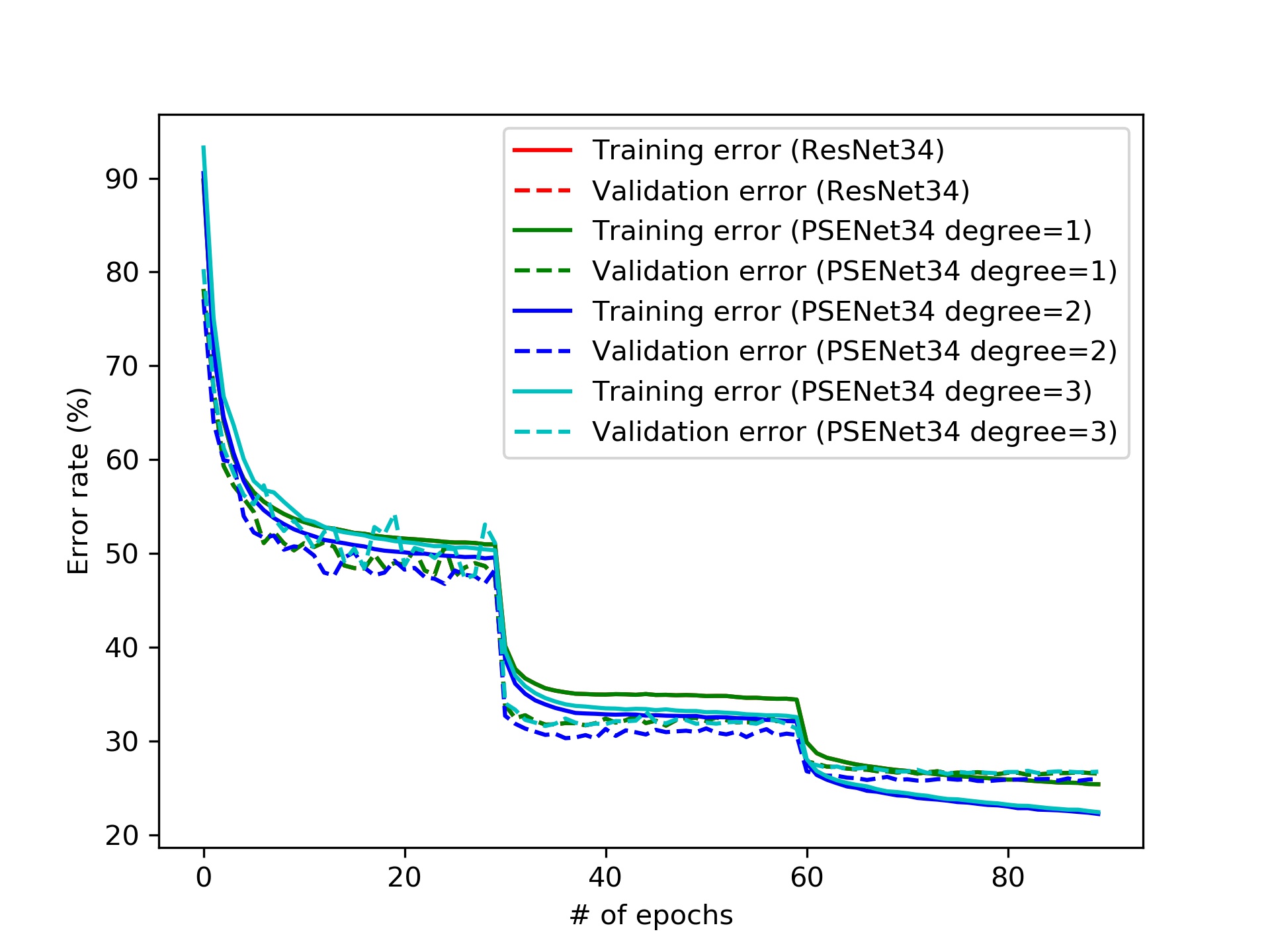}\label{resnet34}
	\caption{Error rates  of image classification on ImageNet for ResNet and PSENet. {\bf Left: } ResNet18 and PSENet18; {\bf right: } ResNet34 and PSENet34. \red{Notice: On the right panel, red lines (ResNet34) are overlapped by green lines (PSENet34 degree=1) and all experimental results shown above come from single optimization runs.}}
	\label{fig2}
\end{figure}

\section{Conclusion}
We develop a novel neural network by combing the ideas of PSE and the neural network approximation. Theoretically, we prove the better approximation result of PSENet by comparing with ReLU$^k$-DNN and the optimal approximation rate on singular functions. Moreover, the PSENet can achieve a better approximation accuracy on the shallow network structure in comparison with other neural networks. Several numerical results have been used to demonstrate the advantages of the PSENet.
This new approach  shows that increasing the degree of PSENet can also lead to further
performance improvements rather than going deep. \red{However, the performance can also decrease when the degree is larger than the optimal degree of PSENet. Obtaining an optimal degree by analysis is one of the future directions. Another interesting avenue to pursue is PSENet with other activation functions rather than ReLU. In this paper, our analysis and numerical results are based on ReLU activation function (or ReLU$^k$). But the novel PSENet architecture can define any activation function. Some challenges, such as weight initialization including $\alpha$ and approximation properties, will also be explored in the future.}

\bibliographystyle{plain}
\bibliography{PSENet}

\end{document}